\newcommand{\tluste}[1]{\mbox{\mathversion{bold}$ #1 $}}
\newcommand{\mace}[1]{{{#1}}}
\newcommand{\mna}[1]{{\mathcal{#1}}}
\newcommand{\omace}[1]{\mbox{$\overline{\mace{#1}}$}} 
\newcommand{\umace}[1]{\mbox{$\underline{\mace{#1}}$}} 
\newcommand{\imace}[1]{\mbox{$\tluste{#1}$}}
\def\Mid#1{#1_c}
\def\Rad#1{#1_\Delta}
\def\Mag#1{\mathop{\mathrm{mag}}#1}	
\def\Mig#1{\mathop{\mathrm{mig}}#1}	
\def\comp#1{{\langle{#1}\rangle}}
\newcommand{\onum}[1]{\mbox{$\overline{{#1}}$}} 
\newcommand{\unum}[1]{\mbox{$\underline{{#1}}$}}
\newcommand{\ivr}[1]{\mbox{$\tluste{#1}$}} 
\newcommand{\inum}[1]{\mbox{$\tluste{#1}$}} 
\newcommand{\R}[0]{{\mathbb{R}}}
\newcommand{\C}[0]{{\mathbb{C}}}
\newcommand{\IR}[0]{{\mathbb{IR}}}
\newcommand{\mmid}[0]{;\,}		
\def\clqq{``}
\def\crqq{''}
\def\quo#1{\clqq{}#1\crqq{}}  
\DeclareMathOperator{\diag}{diag}	
\def\nref#1{$(\ref{#1})$}
\newtheorem{theorem}{Theorem}
\newtheorem{conjecture}{Conjecture}
\theoremstyle{definition}
\newtheorem{definition}{Definition} 
\begin{document}

\title{AE regularity of interval matrices}

\author{
  Milan Hlad\'{i}k\footnote{
Charles University, Faculty  of  Mathematics  and  Physics,
Department of Applied Mathematics, 
Malostransk\'e n\'am.~25, 11800, Prague, Czech Republic, 
e-mail: \texttt{milan.hladik@matfyz.cz}
}
}

\date{\today}
\maketitle

\begin{abstract}
Consider a linear system of equations with interval coefficients, and each interval coefficient is associated with either a universal or an existential quantifier. The AE solution set and AE solvability of the system is defined by $\forall\exists$-quantification. 
Herein, we deal with the problem what properties must the coefficient matrix have in order that there is guaranteed an existence of an AE solution. 
Based on this motivation, we introduce a concept of AE regularity, which implies that the AE solution set is nonempty and the system is AE solvable for every right-hand side. We discuss characterization of AE regularity, and we also focus on various classes of matrices that are implicitly AE regular. Some of these classes are polynomially decidable, and therefore give an efficient way for checking AE regularity. We also state open problems related to computational complexity and characterization.
\end{abstract}

\textbf{Keywords:}\textit{ Interval computation, quantified systems, linear equations, interval systems.}

\section{Introduction}

Solving systems of interval linear equations is a basic problem of interval computation \cite{MooKea2009,Neu1990}. In the last decade, there was a particular interest in the so called AE solutions defined by $\forall\exists$ quantification of interval parameters. AE solutions were studied not only for interval systems  \cite{Gol2005,Hla2015a,LiXia2017,Pop2012,PopHla2013,Sha2002}, but also in the context of linear programming \cite{Hla2016c,Li2015,LiLiu2015,LuoLi2014a}. The purpose of this paper is to investigate what properties the interval matrix should have such that the interval system is AE solvable. We will denote this property AE regularity.

An interval matrix is defined as
$$
\imace{A}:=\{A\in\R^{m\times n}\mmid \umace{A}\leq A\leq \omace{A}\},
$$
where $ \umace{A}$ and $\omace{A}$, $\umace{A}\leq\omace{A}$, are given matrices, and the inequality between matrices is understood entrywise. The corresponding midpoint and the radius matrices are defined respectively as
$$
\Mid{A}:=\frac{1}{2}(\umace{A}+\omace{A}),\quad
\Rad{A}:=\frac{1}{2}(\omace{A}-\umace{A}).
$$
The set of all $m\times n$ interval matrices is denoted by $\IR^{m\times n}$, and intervals and interval vectors are considered as special cases of interval matrices.

Consider an interval system of linear equations $\imace{A}x=\ivr{b}$, where $\imace{A}\in\IR^{m\times n}$ and $\ivr{b}\in\R^m$. Its solutions set is traditionally defined as the union of all solutions of realizations of interval coefficients, that is
$$
\{x\in\R^n\mmid \exists A\in\imace{A},\,\exists b\in\ivr{b}:Ax=b\}.
$$
We say that $\imace{A}$ is \emph{regular} if every $A\in\imace{A}$ is nonsingular. Regularity of $\imace{A}$ implies that each system realization has a unique solution and the solution set is a bounded polyhedron. Checking whether an interval matrix is regular, however, is a co-NP-hard problem \cite{Fie2006,PolRoh1993}. A survey of forty necessary and sufficient condition for regularity were summarized by Rohn \cite{Roh2009}.

Next, we say that some property $\mna{P}$ holds strongly (weakly) for an interval matrix $\imace{A}$ if it holds for every (some) matrix $A\in\imace{A}$.

Let us now consider a more general concept by using $\forall\exists$ quantification of interval parameters. Each interval of $\imace{A}$ and $\ivr{b}$ is associated either with the universal, or with the existential quantifier. Thus, we can disjointly split the interval matrix as $\imace{A}=\imace{A}^{\forall}+\imace{A}^{\exists}$, where $\imace{A}^{\forall}$ is the interval matrix comprising universally quantified coefficients, and  $\imace{A}^{\exists}$ concerns existentially quantified coefficients.
Similarly, we decompose the right-hand side vector $\ivr{b}=\ivr{b}^{\forall}+\ivr{b}^{\exists}$. Now, $x\in\R^n$ is called an \emph{AE solution} if 
\begin{align*}
\forall A^{\forall}\in\imace{A}^{\forall},
\forall b^{\forall}\in\ivr{b}^{\forall},
\exists A^{\exists}\in\imace{A}^{\exists},
\exists b^{\exists}\in\ivr{b}^{\exists}:\,  
(A^{\forall}+A^{\exists})x=b^{\forall}+b^{\exists}.
\end{align*}
The interval system $\imace{A}x=\ivr{b}$ is called \emph{AE solvable} if for each realization of $\forall$-parameters there are realizations of $\exists$-parameters such that the resulting system has a solution. Formally, it is AE solvable if 
\begin{align*}
\forall A^{\forall}\in\imace{A}^{\forall},
\forall b^{\forall}\in\ivr{b}^{\forall},
\exists A^{\exists}\in\imace{A}^{\exists},
\exists b^{\exists}\in\ivr{b}^{\exists}:\,  
(A^{\forall}+A^{\exists})x=b^{\forall}+b^{\exists} \mbox{ is solvable.}
\end{align*}
Obviously, if the interval system has an AE solution, then it is AE solvable, but the converse implication does not hold in general \cite{Hla2015a}.

Related to AE solvability, we introduce the following natural concept of regularity.

\begin{definition}
An interval matrix $\imace{A}=\imace{A}^{\forall}+\imace{A}^{\exists}$ is called \emph{AE regular} if
$\forall A^{\forall}\in \imace{A}^{\forall} \exists A^{\exists}\in \imace{A}^{\exists}$ such that $A=A^{\forall}+A^{\exists}$ is nonsingular.
\end{definition}

AE regularity generalizes regularity of $\imace{A}$ (which is the case when there ar no $\exists$- parameters), so it is also co-NP-hard to check this property. It is also the question how to characterize AE regularity. In the following section, we will approach to this problem and show classes of matrices that are implicitly AE regular.

\section{Strong singularity}

An interval matrix in the form $\imace{A}=0+\imace{A}^{\exists}$, that is, with no $\forall$-quantified interval parameters, is AE regular if and only if there is at least one nonsingular matrix in $\imace{A}$. The negation of this property is an interval matrix containing only singular matrices. Even though this property is not the typical case, it may happen. If we want to characterize AE regularity for the general case, we have to inspect also this particular situation.

Recall that an interval matrix $\imace{A}$ is \emph{strongly singular} if every $A\in\imace{A}$ is singular. 
In the following, we use the term \emph{a vertex} matrix of $\imace{A}$, which is any matrix $A\in\imace{A}$ such that $a_{ij}\in\{\unum{a}_{ij},\onum{a}_{ij}\}$ for all $i,j$.

\begin{theorem}\label{thmSingVert}
$\imace{A}$ is strongly singular if and only if each vertex matrix is singular.
\end{theorem}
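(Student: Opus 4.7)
The forward direction is immediate, since every vertex matrix belongs to $\imace{A}$.

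For the converse, the plan is to argue by contradiction using the multilinearity of the determinant. Suppose there exists some $A^*\in\imace{A}$ with $\det(A^*)\neq 0$. I would then construct a vertex matrix with nonzero determinant by a sequential rounding argument. The key observation is that for any fixed entry position $(i,j)$, the function $a_{ij}\mapsto \det(A)$ (with all other entries of $A$ held fixed) is affine, as is clear from the cofactor expansion along row $i$.

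Now I would iterate the following rounding step over all non-degenerate entries $(i,j)$ of $\imace{A}$, that is, those with $\unum{a}_{ij}<\onum{a}_{ij}$: starting from the current matrix (initially $A^*$), consider $\det$ as a function of $a_{ij}$ alone on the interval $[\unum{a}_{ij},\onum{a}_{ij}]$. Since this function is affine, its maximum over the interval is attained at one of the endpoints; hence
\begin{equation*}
\max\bigl(|\det(A|_{a_{ij}=\unum{a}_{ij}})|,\,|\det(A|_{a_{ij}=\onum{a}_{ij}})|\bigr)\ \geq\ |\det(A)|\ >\ 0.
\end{equation*}
Thus we may replace the current value of $a_{ij}$ with whichever endpoint achieves this maximum, and the resulting matrix still has nonzero determinant and still lies in $\imace{A}$.

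After performing this rounding for every non-degenerate entry, the resulting matrix is a vertex matrix of $\imace{A}$ with nonzero determinant, contradicting the hypothesis that every vertex matrix is singular. Hence every $A\in\imace{A}$ must be singular. The proof is mostly conceptual and I do not anticipate any serious obstacle; the only point to state carefully is the affineness of $\det$ in a single entry, which is standard.
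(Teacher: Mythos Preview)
Your proof is correct and rests on the same key fact as the paper's: the determinant is affine in each individual entry, so its extreme values over the box $\imace{A}$ are attained at vertex matrices. The paper states this in one line (the largest and lowest values of $\det$ over $\imace{A}$ occur at vertex matrices, hence if they all vanish then $\det\equiv 0$ on $\imace{A}$), whereas you unfold the same idea into an explicit sequential rounding procedure; the two arguments are essentially equivalent.
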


\begin{proof}
$\imace{A}$ is strongly singular if and only if and only if $\det(A)=0$ for every $A\in\imace{A}$. Due to linearity of the determinant with respect to the $i,j$th entry, the largest and the lowest value of determinants are attained for vertex matrices.
\end{proof}

\begin{theorem}
If $\imace{A}$ is strongly singular, then $[-\Rad{A},\Rad{A}]$ is strongly singular and in particular $\Rad{A}$ is singular.
\end{theorem}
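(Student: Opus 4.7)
The plan is to exploit the fact that the determinant, viewed as a multilinear function of the matrix entries, is a polynomial and hence rigid: if it vanishes on an interval, it must vanish as a polynomial identity. Combined with Theorem~\ref{thmSingVert}, this reduces the claim to showing that each vertex matrix of $[-\Rad{A},\Rad{A}]$ is singular.

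Concretely, let $D$ be an arbitrary vertex matrix of $[-\Rad{A},\Rad{A}]$, so that $d_{ij}\in\{-\Rad{A}_{ij},\Rad{A}_{ij}\}$ for all $i,j$. First I would observe that for every scalar $t\in[-1,1]$, the matrix $\Mid{A}+tD$ lies in $\imace{A}$, because $|t\, d_{ij}|\leq|d_{ij}|\leq\Rad{A}_{ij}$ entrywise. By the assumption of strong singularity,
\begin{equation*}
f(t):=\det(\Mid{A}+tD)=0\quad\text{for all } t\in[-1,1].
\end{equation*}

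Next I would note that $f$ is a polynomial in $t$ of degree at most $n$, so the fact that it has infinitely many zeros forces $f\equiv 0$ as a polynomial. The leading coefficient of $f$, obtained by extracting the $t^n$ contribution in the Leibniz expansion of $\det(\Mid{A}+tD)$, is exactly $\det(D)$. Hence $\det(D)=0$, i.e., the vertex matrix $D$ is singular. Since $D$ was an arbitrary vertex of $[-\Rad{A},\Rad{A}]$, Theorem~\ref{thmSingVert} yields that $[-\Rad{A},\Rad{A}]$ is strongly singular. Finally, $\Rad{A}$ itself is a vertex of $[-\Rad{A},\Rad{A}]$ (all entries attain their upper bound), so $\Rad{A}$ is singular.

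I do not anticipate a real obstacle: the only place a reader might pause is the passage from ``$f$ vanishes on $[-1,1]$'' to ``$\det(D)=0$,'' which rests on the elementary fact that a univariate polynomial with infinitely many roots is the zero polynomial, together with identifying $\det(D)$ as the $t^n$-coefficient via multilinearity of the determinant.
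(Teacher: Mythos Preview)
Your proof is correct and takes a genuinely different route from the paper. The paper argues by iterated row replacement: starting from matrices in $\imace{A}$ (singular by hypothesis) and using row-linearity of the determinant, it successively replaces midpoint rows by radius rows, showing at each stage that the resulting matrix has determinant zero, until it arrives at $\Rad{A}$ (and, by varying the endpoint matrix used, at any element of $[-\Rad{A},\Rad{A}]$). Your argument instead passes through the one-parameter family $t\mapsto\det(\Mid{A}+tD)$ and uses that a nonzero univariate polynomial has only finitely many roots, reading off $\det(D)$ as the $t^n$-coefficient. Your approach is shorter and more conceptual; the paper's is more constructive and stays entirely within multilinearity, without invoking the polynomial identity principle. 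One minor observation: your restriction to vertex matrices $D$ and the subsequent appeal to Theorem~\ref{thmSingVert} are unnecessary, since the inclusion $\Mid{A}+tD\in\imace{A}$ for $t\in[-1,1]$ and hence the whole argument work verbatim for any $D\in[-\Rad{A},\Rad{A}]$.
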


\begin{proof}
Let $A^1\in\imace{A}$ be a matrix that results from $\Mid{A}$ by replacing the first row by $\omace{A}_{1*}$. Since $\det(A^1)=\det(\Mid{A})=0$ and by row linearity of determinants, the matrix $A^2=A^1-\Mid{A}$ is singular. This matrix has radii in the first row and midpoints in the others. Similarly, we can show singularity of a matrix $A^3$ having radii in the first row, right endpoints in the second and midpoints in the others. By row linearity of determinants we again have that $A^3-A^2$ is singular. This matrix has radii in the first two rows and midpoints in the others. Proceeding further, we arrive at singularity of $\Rad{A}$. 

Replacing $\omace{A}$ by any other matrix $A\in\imace{A}$ in the above considerations, we obtain singularity of any matrix in $[-\Rad{A},\Rad{A}]$.
\end{proof}

Matrices of type $A_{yz}=\Mid{A}-\diag(y)\Rad{A}\diag(z)$, where $y,z\in\{\pm1\}^n$ and $\diag(y)$ denotes the diagonal matrix with $y$ on the diagonal, are often used in verifying various properties of interval matrices such as positive definiteness, and in some sense also regularity \cite{Fie2006}. Notice however that strong singularity cannot be checked by inspecting only these matrices as the following counterexample shows. The matrix
$$
\imace{A}=\begin{pmatrix}[-1,1]&[-1,1]\\{}[-1,1]&[-1,1]\end{pmatrix}
$$
is not strongly singular, but all matrices of type $A_{yz}$ are singular.

We leave two important open questions here:
\begin{itemize}
\item
Is there a simpler (computationally cheaper) characterization of strong singularity?
\item
What is the computational complexity of checking strong singularity. Is in a polynomial or an NP-hard problem?
\end{itemize}

As an open problem we also state the following conjecture. The \quo{if} part is obvious, but the converse is the open and hard one.

\begin{conjecture}
$\imace{A}$ is strongly singular if and only if it has a submatrix of size $k\times\ell$ that is real and has the rank of $k+\ell-n-1$.
\end{conjecture}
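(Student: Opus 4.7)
For the easy \emph{if} direction: given a $k\times\ell$ real submatrix $B$ of $\imace{A}$ with $\rank(B)=r=k+\ell-n-1$, the left null-space of $B$ has dimension $k-r=n-\ell+1$. For any $A\in\imace{A}$, applying each left null vector to the corresponding $k$ rows of $A$ (viewed in $\R^n$) produces a vector that vanishes on the $\ell$ selected columns and hence lies in an $(n-\ell)$-dimensional subspace of $\R^n$. Having $n-\ell+1$ such vectors in an $(n-\ell)$-dimensional space forces a nontrivial linear dependence, which unwinds to a nontrivial dependence among the $k$ rows of $A$; thus every $A\in\imace{A}$ is singular.

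The plan for the hard \emph{only if} direction is induction on $n$. The base $n=1$ is trivial; if $\imace{A}$ is itself fully degenerate, the whole matrix serves as the desired submatrix with $k=\ell=n$ and rank at most $n-1=k+\ell-n-1$. Otherwise, fix a nondegenerate entry $A_{ij}$. Since $\det(A)$ is affine in $A_{ij}$ and vanishes identically on $\imace{A}$, both its coefficient (the signed cofactor $\det(\imace{A}_{[i,j]})$) and its $A_{ij}$-free term must vanish identically on the remaining entries. The first identity says that the $(n-1)\times(n-1)$ interval submatrix $\imace{A}_{[i,j]}$ is itself strongly singular, so the inductive hypothesis supplies a $k'\times\ell'$ real submatrix $B'$ with $\rank(B')\leq k'+\ell'-n$.

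The main obstacle is the \emph{rank gap}: viewed as a submatrix of the $n\times n$ interval matrix $\imace{A}$, $B'$ has rank at most $k'+\ell'-n$, which in the worst case is one unit above the target $k'+\ell'-n-1$. Closing this gap is precisely where the second identity (vanishing of the $A_{ij}$-free term) must be brought to bear. One natural attempt is to enlarge $B'$ by adjoining a row or column of $\imace{A}$ so that $k'+\ell'$ grows by one while the rank is preserved, shifting the bound into the required form; another is to iterate the cofactor argument over all nondegenerate positions $(i,j)$ and combine the resulting structural constraints via a K\"onig--Egerv\'ary-style min-max identity on the bipartite graph of variable positions. I expect this rank-gap closing step to be the genuine source of difficulty and the reason the statement is phrased as a conjecture; a full proof likely requires a matrix-completion rank identity of Lov\'asz type rather than elementary cofactor induction alone.
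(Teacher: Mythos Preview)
The statement you are addressing is labelled a \emph{Conjecture} in the paper, not a theorem: the authors explicitly write that the ``if'' part is obvious and that the converse is open. So there is no paper proof of the ``only if'' direction to compare against; your honest assessment that the rank-gap step is the genuine obstruction, and that this is why the statement is a conjecture, is exactly right.

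Your argument for the ``if'' direction is correct and in fact supplies what the paper omits. The key chain---a real $k\times\ell$ block $B$ of rank $k+\ell-n-1$ has a left null space of dimension $n-\ell+1$; for any $A\in\imace{A}$ the corresponding $n-\ell+1$ row-combinations vanish on the $\ell$ selected columns and hence live in an $(n-\ell)$-dimensional coordinate subspace; a nontrivial relation among them pulls back (via linear independence of the chosen null-space basis) to a nonzero left null vector of the selected $k\times n$ row block of $A$---is sound, and also shows that rank \emph{at most} $k+\ell-n-1$ suffices.

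For the ``only if'' direction your inductive scheme is a natural first attack, and your identification of the one-unit rank gap after passing to the strongly singular $(n-1)\times(n-1)$ cofactor block is precisely the obstacle. Neither of the two repair ideas you sketch (adjoining a row/column without raising the rank, or aggregating cofactor constraints over all nondegenerate positions via a bipartite covering argument) is carried through, so the proposal remains a plan rather than a proof---but that is the state of affairs in the paper as well. If you pursue this, note that the second ingredient from the cofactor expansion (vanishing of the $A_{ij}$-free part) is itself a strong-singularity statement about an interval matrix that is \emph{not} a plain submatrix of $\imace{A}$, so it does not feed back into the same induction directly; exploiting it will likely require working with several nondegenerate positions simultaneously.
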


\section{AE regularity}

Now, we consider AE regularity in the general form. We firs show this property is useful for AE solvability of interval systems.

\begin{theorem}
If $\imace{A}$ is AE regular, then $\imace{A}x=\ivr{b}$ is AE solvable for each $\ivr{b}$. The converse is not true in general.
\end{theorem}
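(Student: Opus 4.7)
For the forward implication, my plan is to fix any $\ivr{b}$ and any realization $A^\forall \in \imace{A}^\forall$, $b^\forall \in \ivr{b}^\forall$ of the universally quantified data. AE regularity will then supply an $A^\exists \in \imace{A}^\exists$ such that $A := A^\forall + A^\exists$ is nonsingular; choosing any $b^\exists \in \ivr{b}^\exists$ (say $\Mid{b}^\exists$), the square system $A x = b^\forall + b^\exists$ has the unique solution $A^{-1}(b^\forall + b^\exists)$, which witnesses AE solvability for this right-hand side.

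For the converse I will exhibit an interval matrix that is not AE regular yet for which $\imace{A}x=\ivr{b}$ is AE solvable for every $\ivr{b}$. I plan to take
\[
\imace{A}^\forall := 0, \qquad
\imace{A}^\exists := \begin{pmatrix} [-1,1] & 0 \\ [-1,1] & 0 \end{pmatrix},
\]
so that every realization has a zero column and hence is singular; in particular no choice of $A^\exists$ can produce a nonsingular $A$, and AE regularity fails. To verify AE solvability for an arbitrary $\ivr{b}$, I fix $b^\forall\in\ivr{b}^\forall$, set $b^\exists := \Mid{b}^\exists$, and show that the target $y := b^\forall + b^\exists$ lies in the range of some realization: if $y=0$ take the zero realization and $x=0$, and otherwise pick the first column $(y_1,y_2)^T/\max(|y_1|,|y_2|)\in[-1,1]^2$, whose one-dimensional span contains $y$.

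The forward direction is essentially a one-liner once the quantifiers are unwound, so the only real design step is the counterexample. The guiding intuition is that strong singularity forces each realization to have a proper-subspace range, but by letting the nonzero column of $\imace{A}^\exists$ sweep the square $[-1,1]^2$ the union of these ranges still covers all of $\R^2$; this is exactly what AE solvability for every $\ivr{b}$ requires when all coefficients of $\imace{A}$ are existential.
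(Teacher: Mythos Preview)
Your proposal is correct and essentially mirrors the paper's own proof. The forward direction is the same one-liner, and your counterexample is the paper's example with the two columns swapped (the paper puts the interval column second rather than first); your verification that every target vector $y$ lies in the range of some realization is more explicit than what the paper writes, but the idea is identical.
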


\begin{proof}
For each $\forall$-realization, we find $\exists$-realization such that $A$ is nonsingular, whence solvability of $Ax=b$ follows.

The counter-example for the converse direction is
\begin{equation*}
\imace{A}^{\forall}=0,\quad
\imace{A}^{\exists}=\begin{pmatrix}0&[-1,1]\\ 0&[-1,1]\end{pmatrix}.
\end{equation*}
Then $\imace{A}$ is not AE regular, but $\imace{A}x=\ivr{b}$ is AE solvable. 
\end{proof}

Obviously, AE regularity implies nonemptiness of the AE solution set. On the other hand, AE regularity does not imply boundedness of the AE solution set; counter-example: 
$
\imace{A}^{\forall}=0,\ 
\imace{A}^{\exists}=([-1,1]),\ \ivr{b}=0.
$

In order to characterize AE regularity, we utilize the following master interval linear system with linear dependencies given by multiple appearance of ${A}^{\forall}$
\begin{align}\label{eqThmCharAeReg}
(A^{\forall}+A^{\exists}_{v})x_{v}=0,\ \ 
[-e,e]^Tx_{v}=1,\ \ 
v\in V,
\end{align}
where $A^{\forall}\in\imace{A}^{\forall}$ and $A^{\exists}_{v}$, $v\in V$, are all vertex matrices of $\imace{A}^{\exists}$.

\begin{theorem}
$\imace{A}$ is not AE regular if and only if \nref{eqThmCharAeReg} is solvable for some $A^{\forall}\in\imace{A}^{\forall}$.
\end{theorem}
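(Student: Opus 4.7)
The plan is to unravel the definition of AE regularity into a strong-singularity condition on a fixed slice of the interval matrix, and then to apply Theorem~\ref{thmSingVert} to that slice. By definition, $\imace{A}$ fails to be AE regular exactly when there exists some $A^{\forall}\in\imace{A}^{\forall}$ such that for every $A^{\exists}\in\imace{A}^{\exists}$ the matrix $A^{\forall}+A^{\exists}$ is singular. In other words, the translated interval matrix $A^{\forall}+\imace{A}^{\exists}$ is strongly singular.

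Next, I would invoke Theorem~\ref{thmSingVert} on the interval matrix $A^{\forall}+\imace{A}^{\exists}$: strong singularity is equivalent to singularity of all its vertex matrices. Since $A^{\forall}$ is fixed, the vertex matrices of $A^{\forall}+\imace{A}^{\exists}$ are exactly $A^{\forall}+A^{\exists}_v$ for $v\in V$. Thus the failure of AE regularity is equivalent to the existence of $A^{\forall}\in\imace{A}^{\forall}$ such that $A^{\forall}+A^{\exists}_v$ is singular for every $v\in V$, i.e.\ each of these matrices admits a nonzero kernel vector $x_v$ satisfying $(A^{\forall}+A^{\exists}_v)x_v=0$.

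The final step is to reinterpret the nonzero-kernel condition via the normalization $[-e,e]^T x_v = 1$. This interval equation is to be read as $1\in[-e,e]^Tx_v=[-\|x_v\|_1,\|x_v\|_1]$, i.e.\ $\|x_v\|_1\ge 1$. Since $(A^{\forall}+A^{\exists}_v)x_v=0$ is a homogeneous linear equation, any nonzero solution can be scaled to satisfy the normalization, and conversely any vector satisfying the normalization is nonzero. Hence singularity of all vertex matrices is equivalent to the existence of vectors $x_v$, $v\in V$, together with $A^{\forall}$ solving the master system~\nref{eqThmCharAeReg}. Chaining the three equivalences yields the theorem.

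The main obstacle is really only interpretive: one must carefully unwind the quantifier structure in the definition of AE regularity and correctly read the interval normalization constraint $[-e,e]^Tx_v=1$ as requiring $\|x_v\|_1\ge 1$ (so that it merely rules out the trivial solution). Once these interpretations are fixed, the proof is an immediate composition of the definition of AE regularity, Theorem~\ref{thmSingVert}, and the homogeneity-scaling observation; no new computations are needed.
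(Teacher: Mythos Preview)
Your proof is correct and follows essentially the same approach as the paper: negate AE regularity to obtain strong singularity of $A^{\forall}+\imace{A}^{\exists}$, invoke Theorem~\ref{thmSingVert} to reduce to singularity of all vertex matrices, and then express each singularity as a homogeneous system with a normalization. Your treatment is in fact more explicit than the paper's, which simply asserts that \nref{eqThmCharAeReg} ``formulates singularity of these matrices'' without spelling out that $[-e,e]^Tx_v=1$ encodes $\|x_v\|_1\ge 1$ and hence, via scaling, the nontriviality of the kernel vector.
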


\begin{proof}
$\imace{A}$ is not AE regular if and only if there is $A^{\forall}\in\imace{A}^{\forall}$ such that $A^{\forall}+\imace{A}^{\exists}$ strongly singular. By Theorem~\ref{thmSingVert}, this is equivalent to the condition that all matrices $A^{\forall}+A^{\exists}_{v}$, $v\in V$, are singular. The system \nref{eqThmCharAeReg} then formulates singularity of these matrices.
\end{proof}

By the above theorem, we reduced AE regularity to solvability of a parametric system. Parametric systems are hard to solve and characterize, even for particular case; see \cite{Hla2008g,May2012,Pop2012}. This suggests that also AE solvability is a very hard problem in general.

\section{Special classes}

This section presents several classes of interval matrices that are inherently AE regular.

\subsection{M-matrix}

A real matrix $A\in\R^{n\times n}$ is an M-matrix if the off-diagonal entries are non-positive and there is $x>0$ such that $Ax>0$.
Interval M-matrices in the strong sense (i.e., with $\forall$-quantification) were investigated, e.g., in
\cite{AleHer1983,Neu1990}. We first discuss $\exists$-quantified version, and then extend it to the general $\forall\exists$ case.

We say that $\imace{A}$ is weakly an M-matrix if there is $A\in\imace{A}$ being an M-matrix. Weak M-matrices are characterized as follows.

\begin{theorem}\label{thmWeakMmat}
Define $\tilde{A}\in\imace{A}$ as follows
\begin{align}\label{eqThmWeakMmat}
\tilde{a}_{ij}=
\begin{cases}
\onum{a}_{ij} & \mbox{if } i=j,\\
\arg\min\{|{a}_{ij}|\mmid {a}_{ij}\in\inum{a}_{ij}\} & \mbox{if } i\not=j.
\end{cases}
\end{align}
Then $\imace{A}$ is weakly an M-matrix if and only if $\tilde{A}$ is an M-matrix.
\end{theorem}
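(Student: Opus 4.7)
The plan is to dispose of the ``if'' direction immediately, since $\tilde{A}\in\imace{A}$ by construction, and then concentrate on the forward implication by showing that $\tilde{A}$ inherits the M-matrix witness from any M-matrix realization $A\in\imace{A}$.

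First I would pin down the sign pattern of $\tilde{A}$. For the diagonal we have $\tilde{a}_{ii}=\onum{a}_{ii}$. For $i\neq j$, the entry $\tilde{a}_{ij}$ is the point of $\inum{a}_{ij}$ with smallest magnitude, so it equals $0$ if $0\in\inum{a}_{ij}$, equals $\onum{a}_{ij}$ if $\onum{a}_{ij}<0$, and equals $\unum{a}_{ij}$ if $\unum{a}_{ij}>0$. In the third case $\tilde{a}_{ij}>0$, but then no $A\in\imace{A}$ can satisfy $a_{ij}\leq 0$, so $\imace{A}$ cannot be weakly an M-matrix and the statement is vacuously true in that direction. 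So we may assume $\unum{a}_{ij}\le 0$ for every $i\neq j$, which gives $\tilde{a}_{ij}\leq 0$ off-diagonally — the sign condition for $\tilde{A}$ to be an M-matrix.

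Next, let $A\in\imace{A}$ be an M-matrix, and pick $x>0$ with $Ax>0$. I would show $\tilde{A}x\geq Ax$ entrywise, which combined with $Ax>0$ yields $\tilde{A}x>0$ and hence that $\tilde{A}$ is an M-matrix. The key componentwise comparisons are:
\begin{itemize}
\item Diagonal: $\tilde{a}_{ii}-a_{ii}=\onum{a}_{ii}-a_{ii}\geq 0$.
\item Off-diagonal: since $A$ is an M-matrix, $a_{ij}\leq 0$, and $\tilde{a}_{ij}\leq 0$ as noted. As $\tilde{a}_{ij}$ has the minimum magnitude in $\inum{a}_{ij}$, in particular $|\tilde{a}_{ij}|\leq |a_{ij}|$, which for two non-positive numbers is equivalent to $\tilde{a}_{ij}\geq a_{ij}$.
\end{itemize}
Multiplying by the positive components $x_j$ and summing then gives $(\tilde{A}x)_i\geq (Ax)_i>0$, as required.

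The only subtle step is the off-diagonal comparison, which hinges on both $a_{ij}$ and $\tilde{a}_{ij}$ being non-positive; this is precisely why the reduction to the case $\unum{a}_{ij}\le 0$ for all $i\ne j$ has to be made first. Otherwise everything is routine: the rest is just the observation that the choice of $\tilde{A}$ pushes the diagonal to its largest value and the off-diagonal entries as close to zero as possible from below, which is exactly what makes the relation $\tilde{A}x\geq Ax$ hold.
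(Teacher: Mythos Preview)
Your proof is correct and follows essentially the same route as the paper: the ``if'' direction is immediate from $\tilde{A}\in\imace{A}$, and for ``only if'' you take a witness $x>0$ with $Ax>0$, establish the entrywise inequality $A\le\tilde{A}$ (using $a_{ij}\le 0$ off-diagonally and the minimal-magnitude choice), and conclude $\tilde{A}x\ge Ax>0$. Your explicit handling of the degenerate case $\unum{a}_{ij}>0$ is more careful than the paper, which leaves the deduction of $\tilde{a}_{ij}\le 0$ implicit, but the argument is otherwise the same.
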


\begin{proof}
\quo{If.}
This is obvious as $\tilde{A}\in\imace{A}$.

\quo{Only if.}
Let $A\in\imace{A}$ be an M-matrix. Then there is a vector $v>0$ such that $Av>0$. Further, from $a_{ij}\leq0$ for $i\not=j$ we have that $A\leq\tilde{A}$ and $\tilde{a}_{ij}\leq0$ for $i\not=j$. Eventually, from $\tilde{A}v\geq Av>0$ it follows that $\tilde{A}$ is an M-matrix, too.
\end{proof}

We say that $\imace{A}^{\forall\exists}$ is an AE M-matrix if $\forall A^{\forall}\in \imace{A}^{\forall}\, \exists A^{\exists}\in \imace{A}^{\exists}$ such that $A^{\forall}+ A^{\exists}$ is an M-matrix.
Obviously, an AE M-matrix is AE regular.

\begin{theorem}
Denote by $\tilde{A}=\umace{A}^{\forall}+\tilde{A}^{\exists}$ the matrix from \nref{eqThmWeakMmat} corresponding to the interval matrix $\umace{A}^{\forall}+\imace{A}^{\exists}$.
Then $\imace{A}^{\forall\exists}$ is an AE M-matrix if and only if $\tilde{A}$ is an M-matrix and $(\omace{A}^{\forall}+\tilde{A}^{\exists})_{ij}\leq0$ for all $i\not=j$.
\end{theorem}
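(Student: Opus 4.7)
The plan is to prove the two implications separately, using $\tilde{A}^{\exists}$ as a single ``universal'' existential witness.

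For the ``if'' direction, I fix any $A^{\forall}\in\imace{A}^{\forall}$ and set $A^{\exists}:=\tilde{A}^{\exists}$, aiming to show that $A:=A^{\forall}+\tilde{A}^{\exists}$ is an M-matrix. The off-diagonal condition is immediate: from $A^{\forall}\leq\omace{A}^{\forall}$ componentwise I obtain $A_{ij}\leq(\omace{A}^{\forall}+\tilde{A}^{\exists})_{ij}\leq 0$ for every $i\neq j$. For the positivity part of the M-matrix definition, I take the vector $v>0$ with $\tilde{A}v>0$ supplied by the M-property of $\tilde{A}$, and exploit the componentwise chain $A=A^{\forall}+\tilde{A}^{\exists}\geq\umace{A}^{\forall}+\tilde{A}^{\exists}=\tilde{A}$; since $v>0$, this yields $Av\geq\tilde{A}v>0$, so $A$ is an M-matrix.

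For the ``only if'' direction, I apply the AE M-matrix hypothesis at two extreme realizations of $A^{\forall}$. Setting $A^{\forall}=\umace{A}^{\forall}$ produces some $A^{\exists}\in\imace{A}^{\exists}$ making $\umace{A}^{\forall}+A^{\exists}$ an M-matrix, so $\umace{A}^{\forall}+\imace{A}^{\exists}$ is weakly an M-matrix and Theorem~\ref{thmWeakMmat} immediately delivers that $\tilde{A}$ itself is an M-matrix. Setting $A^{\forall}=\omace{A}^{\forall}$ produces an $A^{\exists}\in\imace{A}^{\exists}$ whose off-diagonals satisfy $\omace{a}^{\forall}_{ij}+A^{\exists}_{ij}\leq 0$ for $i\neq j$; combined with $A^{\exists}_{ij}\geq\unum{a}^{\exists}_{ij}$ this yields $\omace{a}^{\forall}_{ij}+\unum{a}^{\exists}_{ij}\leq 0$.

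The main technical obstacle is to pass from this existential bound to the required inequality $\omace{a}^{\forall}_{ij}+\tilde{A}^{\exists}_{ij}\leq 0$, because by construction $\tilde{A}^{\exists}_{ij}$ is the element of $\inum{a}^{\exists}_{ij}$ closest to $-\umace{a}^{\forall}_{ij}$, which in general differs from $\unum{a}^{\exists}_{ij}$. The easy sub-case is $-\umace{a}^{\forall}_{ij}\leq\unum{a}^{\exists}_{ij}$, where $\tilde{A}^{\exists}_{ij}=\unum{a}^{\exists}_{ij}$ and the bound is direct. The remaining sub-cases, namely $-\umace{a}^{\forall}_{ij}\in\inum{a}^{\exists}_{ij}$ and $-\umace{a}^{\forall}_{ij}>\onum{a}^{\exists}_{ij}$, are more delicate: one combines the sign structure of the off-diagonal entries (forced by $\tilde{A}$ being an M-matrix) with the width of the universal interval $\inum{a}^{\forall}_{ij}$ to bridge from the bound at $\unum{a}^{\exists}_{ij}$ to the bound at $\tilde{A}^{\exists}_{ij}$. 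Carrying out this case analysis carefully is the technical heart of the proof.
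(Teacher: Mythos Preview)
Your ``if'' direction is correct and essentially identical to the paper's argument.

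Your ``only if'' direction, however, misses the key structural fact that makes the second condition easy: the split $\imace{A}=\imace{A}^{\forall}+\imace{A}^{\exists}$ is \emph{disjoint}, i.e.\ for every entry $(i,j)$ exactly one of $\inum{a}^{\forall}_{ij}$ and $\inum{a}^{\exists}_{ij}$ is the degenerate interval $[0,0]$. Hence $(\omace{A}^{\forall}+\tilde{A}^{\exists})_{ij}$ equals either $\onum{a}^{\forall}_{ij}$ (when $(i,j)$ is a $\forall$-entry, since then $\tilde{A}^{\exists}_{ij}=0$) or $\tilde{A}^{\exists}_{ij}$ (when $(i,j)$ is an $\exists$-entry, since then $\onum{a}^{\forall}_{ij}=0$). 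In the first case your derived bound $\onum{a}^{\forall}_{ij}+\unum{a}^{\exists}_{ij}\leq 0$ already reads $\onum{a}^{\forall}_{ij}\leq 0$; in the second case $\tilde{A}^{\exists}_{ij}=\tilde{a}_{ij}\leq 0$ because $\tilde{A}$ was just shown to be an M-matrix. This is exactly the paper's argument, and the ``main technical obstacle'' you describe does not exist.

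Your proposed case analysis, as phrased, treats $\inum{a}^{\forall}_{ij}$ and $\inum{a}^{\exists}_{ij}$ as potentially both nondegenerate and tries to ``bridge from the bound at $\unum{a}^{\exists}_{ij}$ to the bound at $\tilde{A}^{\exists}_{ij}$'' via the width of $\inum{a}^{\forall}_{ij}$. That cannot be made to work in this generality: in your sub-case $-\unum{a}^{\forall}_{ij}\in\inum{a}^{\exists}_{ij}$ one has $\tilde{A}^{\exists}_{ij}=-\unum{a}^{\forall}_{ij}$, so the target inequality becomes $\onum{a}^{\forall}_{ij}-\unum{a}^{\forall}_{ij}\leq 0$, forcing $\inum{a}^{\forall}_{ij}$ to be degenerate---something that follows only from the disjointness of the split, not from the AE M-matrix hypothesis or the sign structure of $\tilde{A}$. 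Indeed, if both parts were allowed to be nondegenerate the statement would fail (take $n=2$, large diagonal, zero $(2,1)$-entry, $\inum{a}^{\forall}_{12}=[-2,-1]$, $\inum{a}^{\exists}_{12}=[0,3]$: the AE M-matrix property holds, yet $(\omace{A}^{\forall}+\tilde{A}^{\exists})_{12}=-1+2=1>0$). So you must invoke disjointness explicitly, and once you do the whole obstacle collapses to the two-line argument above.
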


\begin{proof}
\quo{If.}

If $\tilde{A}$ is an M-matrix, then there is a vector $v>0$ such that $\tilde{A}v>0$. Hence for every $A^{\forall}\in \imace{A}^{\forall}$ we can take $\tilde{A}^{\exists}$, and for the matrix $A=A^{\forall}+\tilde{A}^{\exists}$ we have $Av\geq\tilde{A}v>0$. Since $A_{ij}\leq(\omace{A}^{\forall}+\tilde{A}^{\exists})_{ij}\leq0$, this part is proved.

\quo{Only if.}
By the assumption, $\umace{A}^{\forall}+\imace{A}^{\exists}$ is weakly an M-matrix, and hence by Theorem~\ref{thmWeakMmat}, $\tilde{A}$ is an M-matrix.
The condition  $(\omace{A}^{\forall}+\tilde{A}^{\exists})_{ij}\leq0$ for all $i\not=j$ holds also from the assumption since from the disjunction of interval parameters we have either $(\omace{A}^{\forall}+\tilde{A}^{\exists})_{ij}=\omace{A}^{\forall}_{ij}$ or $(\omace{A}^{\forall}+\tilde{A}^{\exists})_{ij}=\tilde{A}^{\exists}_{ij}$ and for both cases it is true.
\end{proof}

\subsection{H-matrix}

A matrix $A\in\R^{n\times n}$ is called an H-matrix, if the so called comparison matrix $\langle A\rangle$ is an M-matrix, where $\langle A\rangle_{ii}=|a_{ii}|$ and $\langle A\rangle_{ij}=-|a_{ij}|$ for $i\not=j$.
Interval H-matrices (corresponding to $\forall$-quantification) were investigated, e.g., in \cite{AleHer1983,Neu1990}. We again first discuss $\exists$-quantified version, which will bed then extended to the general $\forall\exists$ case.

We say that $\imace{A}$ is weakly an H-matrix if there is $A\in\imace{A}$ being an H-matrix. This class of matrices as characterized in the following theorem.
Recall that for an interval $\inum{a}\in\IR$ its magnitude and mignitude are respectively defined as
\begin{align*}
\Mag(\inum{a})
 &=\max\{|a|\mmid a\in\inum{a}\}
 =|\Mid{a}|+\Rad{a},\\
\Mig(\inum{a})
 &=\min\{|a|\mmid a\in\inum{a}\}
 =\begin{cases}0&\mbox{if }0\in\inum{a},\\
    \min(|\unum{a}|,|\onum{a}|)&\mbox{otherwise}.
  \end{cases}
\end{align*}

\begin{theorem}\label{thmWeakHmat}
Define $\tilde{A}\in\imace{A}$ as follows
\begin{align}\label{eqThmWeakHmat}
\tilde{a}_{ij}=
\begin{cases}
 \Mag(\inum{a}_{ij}) & \mbox{if } i=j,\\
-\Mig(\inum{a}_{ij}) & \mbox{if } i\not=j.
\end{cases}
\end{align}
Then $\imace{A}$ is weakly an H-matrix if and only if $\tilde{A}$ is an M-matrix.
\end{theorem}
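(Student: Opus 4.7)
The plan is to mirror the proof of Theorem~\ref{thmWeakMmat}, replacing the role of $A$ by its comparison matrix $\langle A \rangle$. The key observation is that $\tilde{A}$ is entrywise an upper bound on $\langle A \rangle$ over all $A \in \imace{A}$, yet is still realizable as $\langle A^{*} \rangle$ for a suitably chosen extremal $A^{*} \in \imace{A}$.

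For the \quo{if} direction, I would construct an explicit witness $A^{*} \in \imace{A}$ whose comparison matrix equals $\tilde{A}$. For each diagonal position, choose $a^{*}_{ii}$ to be an endpoint of $\inum{a}_{ii}$ attaining $\Mag(\inum{a}_{ii})$, so that $|a^{*}_{ii}| = \Mag(\inum{a}_{ii}) = \tilde{a}_{ii}$. For each off-diagonal position, choose $a^{*}_{ij}$ to be an element of $\inum{a}_{ij}$ closest to zero (namely $0$ if $0 \in \inum{a}_{ij}$, and otherwise the endpoint of smaller absolute value), so that $|a^{*}_{ij}| = \Mig(\inum{a}_{ij})$, i.e.\ $-|a^{*}_{ij}| = \tilde{a}_{ij}$. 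Then $\langle A^{*} \rangle = \tilde{A}$, which is assumed to be an M-matrix, so $A^{*}$ is an H-matrix and $\imace{A}$ is weakly an H-matrix.

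For the \quo{only if} direction, suppose $A \in \imace{A}$ is an H-matrix, so $\langle A \rangle$ is an M-matrix. Then there exists $v > 0$ with $\langle A \rangle v > 0$. I would compare $\tilde{A}$ with $\langle A \rangle$ entrywise: on the diagonal, $\tilde{a}_{ii} = \Mag(\inum{a}_{ii}) \geq |a_{ii}| = \langle A \rangle_{ii}$; off the diagonal, since $\Mig(\inum{a}_{ij}) \leq |a_{ij}|$, we get $\tilde{a}_{ij} = -\Mig(\inum{a}_{ij}) \geq -|a_{ij}| = \langle A \rangle_{ij}$. Thus $\tilde{A} \geq \langle A \rangle$ componentwise, so $\tilde{A} v \geq \langle A \rangle v > 0$. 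Combined with the fact that the off-diagonal entries of $\tilde{A}$ are non-positive by construction, this yields that $\tilde{A}$ is an M-matrix.

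There is no serious obstacle; the proof is a direct adaptation of Theorem~\ref{thmWeakMmat}, exploiting monotonicity of the M-matrix property (with fixed sign pattern) under entrywise increase via the Perron-type witness $v$. The only subtlety worth flagging is that, unlike in the M-matrix case, the matrix $\tilde{A}$ is generally not itself a member of $\imace{A}$; its role is to serve as the tight upper bound on $\langle A\rangle$ among $A \in \imace{A}$, and the actual member realizing this bound is $A^{*}$ constructed above.
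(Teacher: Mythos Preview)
Your proof is correct and follows essentially the same approach as the paper: the \quo{only if} direction is identical (use $\tilde{A}\geq\langle A\rangle$ entrywise together with the witness $v>0$), and your \quo{if} direction makes explicit the matrix $A^{*}\in\imace{A}$ with $\langle A^{*}\rangle=\tilde{A}$ that the paper only alludes to. Your final remark is well taken: despite the theorem's wording $\tilde{A}\in\imace{A}$, in general $\tilde{A}$ need not lie in $\imace{A}$ (e.g., if some $\inum{a}_{ii}$ contains only negative values, then $\Mag(\inum{a}_{ii})>0\notin\inum{a}_{ii}$); what is true, and what the paper's proof actually uses, is that $\tilde{A}=\langle A^{*}\rangle$ for some $A^{*}\in\imace{A}$.
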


\begin{proof}
\quo{If.}
This is obvious as the matrix $A\in\imace{A}$ corresponding to $\tilde{A}$ (i.e., the entries of $A$ are attained as magnitutes and mignitudes in \nref{eqThmWeakHmat}) is an H-matrix.

\quo{Only if.}
Let $A\in\imace{A}$ be an H-matrix. Then $\comp{A}$ is an M-matrix, and there is a vector $v>0$ such that $\comp{A}v>0$. Therefore $\tilde{A}v\geq\comp{A}v>0$.
\end{proof}

We say that $\imace{A}^{\forall\exists}$ is an AE H-matrix if $\forall A^{\forall}\in \imace{A}^{\forall}\, \exists A^{\exists}\in \imace{A}^{\exists}$ such that $A^{\forall}+ A^{\exists}$ is an H-matrix.
Obviously, an AE H-matrix is AE regular.

\begin{theorem}
Define the matrix $\tilde{A}$ as follows
\begin{align*}
\tilde{a}^{\exists}_{ij}=
\begin{cases}
 \Mig(\inum{a}^{\forall}_{ij})+\Mag(\inum{a}^{\exists}_{ij})&\mbox{if } i=j,\\
-\Mag(\inum{a}^{\forall}_{ij})-\Mig(\inum{a}^{\exists}_{ij})&\mbox{if } i\not=j.
\end{cases}
\end{align*}
Then $\imace{A}^{\forall\exists}$ is an AE H-matrix if and only if $\tilde{A}$ is an M-matrix.
\end{theorem}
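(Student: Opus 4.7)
The proof parallels that for AE M-matrices and is built on Theorem~\ref{thmWeakHmat}. The starting point is the observation that $\imace{A}^{\forall\exists}$ is an AE H-matrix precisely when, for each realization $A^{\forall}\in\imace{A}^{\forall}$, the interval matrix $A^{\forall}+\imace{A}^{\exists}$ is weakly an H-matrix; this reduces the task to analysing how the weak H-matrix criterion of Theorem~\ref{thmWeakHmat} transforms when one additionally quantifies universally over $A^{\forall}$.

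For the ``if'' direction, I would assume $\tilde{A}$ is an M-matrix and extract $v>0$ with $\tilde{A}v>0$. Given any $A^{\forall}\in\imace{A}^{\forall}$, I would build $A^{\exists}$ entrywise by matching signs. On the diagonal, pick $a^{\exists}_{ii}\in\inum{a}^{\exists}_{ii}$ having the same sign as $a^{\forall}_{ii}$ and the largest feasible absolute value, yielding
$$|a^{\forall}_{ii}+a^{\exists}_{ii}|\geq\Mig(\inum{a}^{\forall}_{ii})+\Mag(\inum{a}^{\exists}_{ii}).$$
Off the diagonal, pick $a^{\exists}_{ij}$ to cancel $a^{\forall}_{ij}$ as much as possible, giving
$$|a^{\forall}_{ij}+a^{\exists}_{ij}|\leq\Mag(\inum{a}^{\forall}_{ij})+\Mig(\inum{a}^{\exists}_{ij}).$$
Combined entrywise, these estimates imply $\comp{A^{\forall}+A^{\exists}}v\geq\tilde{A}v>0$, so $A^{\forall}+A^{\exists}$ is an H-matrix, and $\imace{A}^{\forall\exists}$ is AE H.

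For the ``only if'' direction, I would choose an adversarial $A^{\forall}\in\imace{A}^{\forall}$ whose entries attain $\Mig(\inum{a}^{\forall}_{ii})$ on the diagonal and $\Mag(\inum{a}^{\forall}_{ij})$ off the diagonal, with signs picked so that no $A^{\exists}$ can improve on the bounds above. The AE H-matrix hypothesis then produces some $A^{\exists}$ such that $A^{\forall}+A^{\exists}$ is an H-matrix, and applying Theorem~\ref{thmWeakHmat} to the interval matrix $A^{\forall}+\imace{A}^{\exists}$ at this $A^{\forall}$ delivers the required M-matrix property of $\tilde{A}$.

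The main technical obstacle is the sign analysis underpinning both bounds. For the ``if'' direction the prescribed $a^{\exists}_{ij}$ must actually lie in $\inum{a}^{\exists}_{ij}$, which forces a case split when the interval straddles zero or when its extremes have unfavourable signs relative to $a^{\forall}_{ij}$; in precisely those cases $\Mig$ vanishes and the inequalities should collapse to something achievable, but each case has to be checked. The ``only if'' direction is subtler, because one must exhibit a single $A^{\forall}$ that is simultaneously adversarial for every entry; this relies on the observation that distinct entries of $A^{\forall}$ appear in disjoint places of the H-matrix inequality $\comp{A}v>0$, so the entrywise extremal choices can be made independently without conflict.
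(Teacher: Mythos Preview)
Your approach is correct and follows the same two-step structure as the paper's proof: for the ``if'' direction choose a suitable $A^{\exists}$ and compare $\comp{A^{\forall}+A^{\exists}}$ with $\tilde{A}$ entrywise; for the ``only if'' direction pick the adversarial $A^{\forall}$ realising the mignitudes on the diagonal and magnitudes off the diagonal, and then invoke the weak H-matrix criterion.

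What you are missing is that the decomposition $\imace{A}=\imace{A}^{\forall}+\imace{A}^{\exists}$ is \emph{disjoint}: each entry $(i,j)$ is either a $\forall$- or an $\exists$-parameter, so exactly one of $\inum{a}^{\forall}_{ij}$, $\inum{a}^{\exists}_{ij}$ is the degenerate interval $[0,0]$. Consequently your sign-matching construction is vacuous: if the entry is $\exists$-quantified then $a^{\forall}_{ij}=0$ and there is no sign to match, while if it is $\forall$-quantified then $a^{\exists}_{ij}=0$ is forced. The paper therefore simply takes a \emph{fixed} matrix $\tilde{A}^{\exists}\in\imace{A}^{\exists}$ (the one whose entries attain $\Mag$ on the diagonal and $\Mig$ off it), independent of $A^{\forall}$, and the inequality $\comp{A^{\forall}+\tilde{A}^{\exists}}\geq\tilde{A}$ follows immediately entry by entry. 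The ``technical obstacle'' you anticipate---the case split on whether intervals straddle zero and whether extreme signs align---never actually arises, and the proof is a few lines long.
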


\begin{proof}
\quo{If.}
Let $\tilde{A}^{\forall}\in \imace{A}^{\forall}$ and $\tilde{A}^{\exists}\in \imace{A}^{\exists}$ be the corresponding matrices, for which $\tilde{A}$ is attained. Then $\tilde{A}=\tilde{A}^{\forall}+\tilde{A}^{\exists}$.
Let $A^{\forall}\in \imace{A}^{\forall}$ be arbitrary and define $A:=A^{\forall}+\tilde{A}^{\exists}$. Since $\tilde{A}$ is an M-matrix, there is a vector $v>0$ such that $\tilde{A}v>0$. From $\comp{A}v\geq\tilde{A}v>$ we conclude that $A$ is an H-matrix.

\quo{Only if.}
By the assumption, there exists $A^{\exists}\in \imace{A}^{\exists}$ such that $\tilde{A}^{\forall}+A^{\exists}$ is an H-matrix. That is, there is a vector $v>0$ such that $\comp{\tilde{A}^{\forall}+A^{\exists}}v>0$. 
From $\tilde{A}v\geq\comp{\tilde{A}^{\forall}+A^{\exists}}v>0$ we have that $\tilde{A}$ is an M-matrix.
\end{proof}

It is known that if $\Mid{A}$ is an M-matrix, then $\imace{A}$ is regular if and only if $\imace{A}$ is strongly H-matrix. For generalized quantification, this statement is no longer valid. Consider, for example, the interval matrix
\begin{align*}
\imace{A}^{\forall\exists}
=\begin{pmatrix}[0.8,1]^{\exists}&-[0,1]^{\forall}\\-1&1\end{pmatrix}.
\end{align*}
Then $\Mid{A}$ is an M-matrix and $\imace{A}^{\forall\exists}$ is AE regular, but it is not an AE H-matrix.

\subsection{Inverse nonnegative matrices}

A matrix $A\in\R^{n\times n}$ is inverse nonnegative if $A^{-1}\geq0$.
Strong inverse nonnegativity of an interval matrix $\imace{A}$, was studied in \cite{Kut1971,Roh1987,Roh2012a}, among others. For this class a simple characterization exists since $\imace{A}$ is inverse nonnegative if and only if $\umace{A}$ and $\omace{A}$ are inverse nonnegative.

In our $\forall\exists$ quantification, we say that $\imace{A}^{\forall\exists}$ is AE inverse nonnegative if $\forall A^{\forall}\in \imace{A}^{\forall}\, \exists A^{\exists}\in \imace{A}^{\exists}$ such that $A:=A^{\forall}+ A^{\exists}$ is inverse nonnegative.
Obviously, an AE inverse nonnegative matrix is AE regular.

In contrast to $\forall$-quantified case, for the general AE inverse nonnegativity it seems there is no simple characterization. 
As a sufficient condition obtained by reversing the order of quantifiers, we obtain that $\imace{A}^{\forall\exists}$ is AE inverse nonnegative if $\exists A^{\exists}\in \imace{A}^{\exists}\,\forall A^{\forall}\in \imace{A}^{\forall}$ the matrix $A^{\forall}+ A^{\exists}$ is inverse nonnegative. This can be characterized in the following manner: $\exists A^{\exists}\in \imace{A}^{\exists}$ such that both matrices $\umace{A}^{\forall}+ A^{\exists}$ and $\omace{A}^{\forall}+ A^{\exists}$ are inverse nonnegative. How to choose $\exists A^{\exists}\in \imace{A}^{\exists}$ is, however, an open question.

\subsection{Structured quantifiers position}

Real matrices are often somehow structured. Interval matrices can have a specific structure of interval parameters in addition. In this section we focus on intervals matrices with particular structures.

The following theorem characterizes strong singularity for the case when intervals are situated in one row or one column only. By a suitable permutation of rows and columns, the form of \nref{maceThmRobSingRow} can easily be achieved, where $\ivr{b}\in\IR^k$ and $c$ can possibly be empty.

\begin{theorem}\label{thmAeRegRow}
The square interval matrix
\begin{align}\label{maceThmRobSingRow}
\imace{A}=\begin{pmatrix}B&\ivr{b}\\C&c\end{pmatrix}
\end{align}
with $\Rad{b}>0$ is strongly singular if and only if $(B^T\;C^T)$ or $(C\;c)$ has not full row rank.
\end{theorem}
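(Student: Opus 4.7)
The plan is to exploit the fact that only the entries of the interval column $\ivr{b}$ vary in $A$, so by cofactor expansion along the last column the determinant is an affine function of $b\in\R^k$,
$$
\det A(b)=\alpha+\beta^T b,\qquad
\alpha=\det\begin{pmatrix}B&0\\ C&c\end{pmatrix},\quad
\beta_i=(-1)^{i+n}\det M_{i,n},
$$
where $M_{i,n}$ is the $(n-1)\times(n-1)$ minor of $\begin{pmatrix}B\\ C\end{pmatrix}$ obtained by deleting row $i$. Because $\Rad{b}>0$, the box $\ivr{b}$ has nonempty interior in $\R^k$, so strong singularity is equivalent to $\det A(b)$ vanishing on all of $\ivr{b}$, which by affinity forces $\alpha=0$ and $\beta=0$. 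The whole proof then reduces to translating these two vanishing conditions into the stated rank hypotheses on $\begin{pmatrix}B\\ C\end{pmatrix}$ and $(C\;c)$.

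For the \quo{if} direction I would split into two easy sub-cases. If $(B^T\;C^T)$ lacks full row rank, i.e.\ $\begin{pmatrix}B\\ C\end{pmatrix}$ has rank at most $n-2$, then appending any column yields an $n\times n$ matrix of rank at most $n-1$, so $A(b)$ is singular for every $b$. If instead $(C\;c)$ lacks full row rank, pick a nonzero $y_2$ with $y_2^T(C\;c)=0$; then $(0,y_2)^T$ is a left null vector of $A(b)$ for every $b\in\ivr{b}$, and again $\imace{A}$ is strongly singular.

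For the \quo{only if} direction I would assume that both $(B^T\;C^T)$ and $(C\;c)$ have full row rank and produce some $b\in\ivr{b}$ with $A(b)$ nonsingular. Since $\begin{pmatrix}B\\ C\end{pmatrix}$ then has full column rank $n-1$, its left null space is one-dimensional, spanned by some $y=(y_1,y_2)$ with $y_1\in\R^k$, $y_2\in\R^{n-k}$. A short embedding argument shows $\det M_{i,n}=0$ iff $y_i=0$, so $\beta=0$ iff $y_1=0$. If $\beta\ne 0$, then $\det A(\cdot)$ is non-constant affine and is nonzero somewhere in the interior of $\ivr{b}$. If $\beta=0$, then $y=(0,y_2)$ with $y_2^T C=0$; the full row rank assumption on $(C\;c)$ then forces $y_2^T c\ne 0$, and computing the left null space of $A(0)$ shows $\alpha\ne 0$, so $\det A(b)=\alpha$ is nonzero throughout $\ivr{b}$.

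The main obstacle, to my mind, is the row-deletion bookkeeping in the \quo{only if} part: cleanly establishing that $\det M_{i,n}=0$ iff $y_i=0$ when $\begin{pmatrix}B\\ C\end{pmatrix}$ has rank $n-1$, and then linking $\alpha=0$ to the orthogonality $y_2^T c=0$ via the left null space of $A(0)$. The remaining ingredients are the single affine-determinant observation and the transparent null-vector constructions used in the \quo{if} direction.
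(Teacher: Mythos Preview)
Your argument is correct, and it shares with the paper the same core device: Laplace expansion along the last column turns $\det A(b)$ into an affine function of $b$, and $\Rad{b}>0$ forces strong singularity to be equivalent to the vanishing of all the relevant cofactors. The organization of the \quo{only if} direction differs, however. The paper argues by contradiction: assuming strong singularity together with full column rank of $\begin{pmatrix}B\\ C\end{pmatrix}$, it first deduces that the last column $(b^T,c^T)^T$ lies in the column span of $\begin{pmatrix}B\\ C\end{pmatrix}$ for every $b$, hence $c\in\mathrm{col}(C)$; then, assuming $(C\;c)$ has full row rank, it gets that $C$ itself has full row rank, extends $C$ to an $(n-1)\times(n-1)$ nonsingular submatrix of $\begin{pmatrix}B\\ C\end{pmatrix}$, and thereby exhibits a single nonzero cofactor, contradicting strong singularity. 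Your route is a direct contrapositive: you parametrize the one-dimensional left null space of $\begin{pmatrix}B\\ C\end{pmatrix}$ by a vector $y$ and identify each cofactor $\beta_i$ with (the nonvanishing of) $y_i$, which cleanly splits into the cases $\beta\neq0$ and $\beta=0$; in the latter you use the full row rank of $(C\;c)$ to force $y_2^Tc\neq0$ and hence $\alpha\neq0$. Your version is more systematic and makes the equivalence $\{\alpha=0,\ \beta=0\}\Leftrightarrow\{\text{one of the rank conditions fails}\}$ fully transparent; the paper's version is terser and avoids the case split by leveraging strong singularity earlier in the argument. The row-deletion fact you flag as the main obstacle (for an $n\times(n-1)$ matrix of rank $n-1$, the $(n-1)\times(n-1)$ minor obtained by deleting row $i$ is nonsingular iff $y_i\neq0$) is standard and your sketch of it is fine.
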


\begin{proof}
\quo{If}. 
Obvious.

\quo{Only if}.
Suppose that $(B^T\;C^T)$ has full row rank. Then $c$ has length at lest 1 since otherwise we could choose $b\in\ivr{b}$ such that $A$ would be nonsingular. Since $\imace{A}$ is strongly singular and $(B^T\;C^T)$ has full row rank, the vector $(b^T,c^T)$ is linearly dependent on the rows of $(B^T\;C^T)$ for each $b\in\ivr{b}$. In particular, $c^T$ is linearly dependent on the rows of $C^T$. 

Now, suppose to the contrary that $(C\;c)$ has full row rank. Then also $C$ has full row rank since otherwise $c^T$ wouldn't bw linearly dependent on the rows of $C^T$. Thus we can extend $C^T$ to a nonsingular square submatrix of $(B^T\;C^T)$. Without loss of generality assume that it is the right part of $(B^T\;C^T)$. Consider the Laplace expansion of $A$ of the last column. Then the coefficient by $a_{1n}$ (i.e., by $b_1$) is nonzero, and therefore by varying $b_1\in\ivr{b}_1$, the determinant of $A$ cannot be constantly zero. A contradiction with strong singularity of $\imace{A}$.
\end{proof}

The following theorem characterizes AE regularity for the case when intervals are situated in one row or one column only. By a suitable permutation of rows and columns, we can always achieve the form
\begin{align}\label{maceThmAeRegRow}
\imace{A}^{\forall\exists}=
\begin{pmatrix}
\imace{B}^{\forall}&\ivr{b}^{\exists}\\\imace{C}^{\forall}&\imace{c}^{\forall}
\end{pmatrix}
\end{align}
with $\Rad{b^{\exists}}>0$.

\begin{theorem}
The square interval matrix \nref{maceThmAeRegRow} is AE regular if and only if $(\imace{B}^T\;\imace{C}^T)$ and $(\imace{C}\;\imace{c})$ have strongly full row rank.
\end{theorem}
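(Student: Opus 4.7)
The plan is to reduce the statement to Theorem~\ref{thmAeRegRow} applied fiberwise over the $\forall$-realizations. Since the $\exists$-quantified parameters occupy only the top-right block $\ivr{b}^{\exists}$, the definition of AE regularity reads: for every choice of $B\in\imace{B}^{\forall}$, $C\in\imace{C}^{\forall}$, $c\in\imace{c}^{\forall}$, there exists $b\in\ivr{b}^{\exists}$ such that
$\begin{pmatrix}B&b\\C&c\end{pmatrix}$
is nonsingular. Equivalently, for every such $\forall$-realization, the interval matrix
$\begin{pmatrix}B&\ivr{b}^{\exists}\\C&c\end{pmatrix}$
(in which the only interval entries are those of $\ivr{b}^{\exists}$) is \emph{not} strongly singular.

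Next, I invoke Theorem~\ref{thmAeRegRow}, whose hypothesis $\Rad{b}>0$ is satisfied here because we have assumed $\Rad{b^{\exists}}>0$. The theorem tells us that non-strong-singularity of the fibered interval matrix is equivalent to both $(B^T\;C^T)$ and $(C\;c)$ having full row rank. Therefore AE regularity is equivalent to: for all $B\in\imace{B}^{\forall}$, $C\in\imace{C}^{\forall}$, $c\in\imace{c}^{\forall}$, the real matrices $(B^T\;C^T)$ and $(C\;c)$ both have full row rank. By definition of strong full row rank, this is exactly the condition that $(\imace{B}^T\;\imace{C}^T)$ and $(\imace{C}\;\imace{c})$ have strongly full row rank.

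The argument has no real obstacle beyond correctly verifying that the two universal quantifiers on the outside split cleanly: the condition on $(B^T\;C^T)$ involves only $B$ and $C$, while the condition on $(C\;c)$ involves only $C$ and $c$, and each is a property of a single realization of the relevant $\forall$-parameters, so ``for all $(B,C,c)$ both hold'' is the same as ``for all $(B,C)$ the first holds'' together with ``for all $(C,c)$ the second holds''. This splitting is what allows the joint AE condition to be stated as two independent strong-rank assertions on disjoint interval submatrices. The only subtlety I would flag is ensuring that the hypothesis $\Rad{b^{\exists}}>0$ is used, without which Theorem~\ref{thmAeRegRow} would not apply and a degenerate $\exists$-column could falsely force strong singularity on one side without being reflected in the rank condition.
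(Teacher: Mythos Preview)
Your proposal is correct and follows essentially the same approach as the paper: both reduce AE regularity to Theorem~\ref{thmAeRegRow} applied to each $\forall$-realization $(B,C,c)$, obtaining the two rank conditions and then passing to the strong versions. The paper argues via negation (not AE regular $\Leftrightarrow$ some realization is strongly singular $\Leftrightarrow$ some realization fails one of the rank conditions), while you argue directly; your extra paragraph justifying the split of the universal quantifier over the conjunction is a welcome point of rigor that the paper leaves implicit.
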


\begin{proof}
By negation, $\imace{A}^{\forall\exists}$ is not AE regular if and only if there are $B^{\forall}\in\imace{B}^{\forall}$, $C^{\forall}\in\imace{C}^{\forall}$ and $c^{\forall}\in\imace{c}^{\forall}$ such that 
\begin{align*}
\begin{pmatrix}
{B}^{\forall}&\ivr{b}^{\exists}\\{C}^{\forall}&{c}^{\forall}
\end{pmatrix}
\end{align*}
is strongly singular. By Theorem~\ref{thmAeRegRow}, $({B}^T\;{C}^T)$ or $({B}\;{c})$ has not full row rank. Therefore, $\imace{A}^{\forall\exists}$ is not AE regular if and only if $(\imace{B}^T\;\imace{C}^T)$ or $(\imace{B}\;\imace{c})$ has not full row rank.
\end{proof}

As a related structured matrix, we have the following:

\begin{theorem}
Let $\imace{B}\in\IR^{n\times k}$ and $\imace{C}\in\IR^{n\times (n-k)}$ with $\Rad{C}>0$.
Then $(\imace{B}^{\forall}\,\imace{C}^{\exists})$ is AE regular if and only if $\imace{B}^{\forall}$ has strongly full column rank.
\end{theorem}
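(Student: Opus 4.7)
The plan is to prove the two implications separately, and the key observation is that the existential block $\imace{C}^{\exists}$, having all positive radii, contains a full-dimensional box in $\R^{n\times(n-k)}$, which is huge compared to the algebraic set of \quo{bad} completions.

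For necessity I argue by contrapositive. Suppose $\imace{B}^{\forall}$ does \emph{not} have strongly full column rank. Then there is some $B^{\forall}\in\imace{B}^{\forall}$ whose $k$ columns are linearly dependent. For \emph{every} choice of $C^{\exists}\in\imace{C}^{\exists}$, the first $k$ columns of the $n\times n$ block matrix $(B^{\forall}\,C^{\exists})$ are still linearly dependent, so the whole matrix is singular. Thus this particular $B^{\forall}$ admits no completing $C^{\exists}$, which contradicts AE regularity.

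For sufficiency, fix an arbitrary $B^{\forall}\in\imace{B}^{\forall}$; by assumption its columns are linearly independent. The goal is to produce some $C^{\exists}\in\imace{C}^{\exists}$ making $(B^{\forall}\,C^{\exists})$ nonsingular. View the map $C\mapsto\det(B^{\forall}\,C)$ as a polynomial in the $n(n-k)$ entries of $C$. Since the columns of $B^{\forall}$ can be extended to a basis of $\R^n$, there exists at least one real matrix $C^{\star}\in\R^{n\times(n-k)}$ making $(B^{\forall}\,C^{\star})$ nonsingular, so this polynomial is not identically zero. Hence its zero set is a proper algebraic subvariety of $\R^{n\times(n-k)}$ and has empty interior. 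The assumption $\Rad{C}>0$ means every entry of $\imace{C}$ is a nondegenerate interval, so $\imace{C}^{\exists}$ has nonempty interior inside $\R^{n\times(n-k)}$. Therefore $\imace{C}^{\exists}$ cannot be contained in the zero set of the determinant polynomial, and a suitable $C^{\exists}\in\imace{C}^{\exists}$ exists.

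The content of the statement is a soft genericity result: once the universal block has independent columns, the existential block has enough freedom (an $n(n-k)$-dimensional box) that it can always be chosen to complete a basis. I do not foresee a serious obstacle. The only mild technicality is the passage from \quo{the zero set of a nonzero polynomial has empty interior} to the existence of a completing $C^{\exists}$; if one prefers an explicit construction, one can instead extend the columns of $B^{\forall}$ to a basis of $\R^n$ using any vectors, then perturb those vectors into the interior of $\imace{C}^{\exists}$, and observe that nonsingularity is an open condition so it is preserved under sufficiently small perturbations.
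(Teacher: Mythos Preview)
Your proof is correct and covers the same ground as the paper's, though with considerably more detail. For necessity both you and the paper argue by contrapositive (the paper just writes ``Obvious''). For sufficiency the paper also argues by contrapositive and simply asserts, without justification, that if $\Rad{C}>0$ and $B^{\forall}$ has independent columns then some $C^{\exists}\in\imace{C}^{\exists}$ yields a nonsingular matrix; your zero-set-of-a-nonzero-polynomial argument is a clean way to justify exactly this step, which the paper leaves implicit.

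One small quibble: the alternative ``explicit construction'' you sketch at the end does not work as stated. Extending $B^{\forall}$ to a basis with \emph{arbitrary} vectors may land far from $\imace{C}^{\exists}$, so the perturbation needed to move those vectors into $\imace{C}^{\exists}$ need not be small, and openness of nonsingularity alone is not enough to guarantee it survives. This aside is inessential to your main argument, which stands on its own.
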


\begin{proof}
\quo{If}. By negation, suppose there is $B^{\forall}\in\imace{B}^{\forall}$ such that $(B^{\forall}\,\imace{C}^{\exists})$ is strongly singular. Since $\Rad{C}>0$, the matrix $B^{\forall}$ must have linearly dependent columns (otherwise there is $C^{\forall}\in\imace{C}^{\forall}$ such that $(B^{\forall}\,C^{\exists})$ is nonsingular).

\quo{Only if}. Obvious.
\end{proof}

As an open problem, we leave a generalization of the above two results.

\begin{conjecture}
The square interval matrix 
\begin{align*}
\begin{pmatrix}
\imace{B}^{\forall}&\ivr{D}^{\exists}\\\imace{C}^{\forall}&\imace{E}^{\forall}\end{pmatrix},
\end{align*}
where $\Rad{D^{\exists}}>0$, is AE regular if and only if $(\imace{B}^T\;\imace{C}^T)$ and $(\imace{C}\;\imace{E})$ have strongly full row rank.
\end{conjecture}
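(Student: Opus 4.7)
The plan is to handle the two implications separately, with the \quo{only if} direction being straightforward and the bulk of the effort devoted to the converse. For the easy direction, suppose $(\imace{B}^T\;\imace{C}^T)$ does not have strongly full row rank: then there exist $B^{\forall}\in\imace{B}^{\forall}$, $C^{\forall}\in\imace{C}^{\forall}$ for which the columns of $\begin{pmatrix}B^{\forall}\\C^{\forall}\end{pmatrix}$ are linearly dependent, hence the first $q$ columns of any realized matrix $M$ are dependent regardless of $D^{\exists}$, contradicting AE regularity; the argument for $(\imace{C}\;\imace{E})$ is symmetric through the last $n-p$ rows.

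For the converse, fix arbitrary realizations $B^{\forall}\in\imace{B}^{\forall}$, $C^{\forall}\in\imace{C}^{\forall}$, $E^{\forall}\in\imace{E}^{\forall}$ and write $M=\begin{pmatrix}B&D\\C&E\end{pmatrix}$. Since $(C\;E)$ has full row rank $n-p$, its right null space has dimension $p$; let the columns of $Y=\begin{pmatrix}Y_1\\Y_2\end{pmatrix}\in\R^{n\times p}$ form a basis, so $CY_1+EY_2=0$. A block computation yields $MY=\begin{pmatrix}BY_1+DY_2\\0\end{pmatrix}$, hence $M$ is nonsingular if and only if the $p\times p$ matrix $BY_1+DY_2$ is nonsingular. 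The task reduces to producing some $D\in\R^{p\times(n-q)}$ achieving this; once a single good $D$ exists, $\det(M)$ is a non-identically-zero polynomial in the entries of $D$, and the assumption $\Rad{D^{\exists}}>0$ guarantees that the box $\ivr{D}^{\exists}$ contains an element off the zero locus, providing a valid $\exists$-realization.

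The crux of the construction, and the step I anticipate as the main obstacle, is the injectivity of the linear map $\Phi\colon\R^p\to\R^p\oplus\R^{n-q}$ defined by $\Phi(v)=(BY_1 v,\,Y_2 v)$, since this is where both rank hypotheses must conspire via the identity $CY_1+EY_2=0$: if $\Phi(v)=0$ then $CY_1 v=-EY_2 v=0$, so $\begin{pmatrix}B\\C\end{pmatrix}(Y_1 v)=0$ forces $Y_1 v=0$ by the full column rank of $\begin{pmatrix}B\\C\end{pmatrix}$, and combined with $Y_2 v=0$ the full column rank of $Y$ gives $v=0$. Once $\Phi$ is injective, the rest is a transparent dimension count: set $k=\rank(Y_2)$ and pick a basis $v_1,\dots,v_p$ of $\R^p$ with $v_1,\dots,v_{p-k}$ spanning $\ker Y_2$; then $(BY_1+DY_2)v_i=BY_1 v_i$ for $i\le p-k$ are $D$-independent and linearly independent by injectivity of $\Phi$, while $Y_2 v_{p-k+1},\dots,Y_2 v_p$ form a basis of the image of $Y_2$ in $\R^{n-q}$, so $D$ may be prescribed freely on these vectors. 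We choose $D$ so that the vectors $(BY_1+DY_2)v_j$ for $j>p-k$ complete $\{BY_1 v_i\}_{i\le p-k}$ to a basis of $\R^p$, yielding nonsingularity of $BY_1+DY_2$.
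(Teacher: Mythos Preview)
The paper states this as an open \emph{conjecture}, explicitly introduced with the words ``As an open problem, we leave a generalization of the above two results'', and provides no proof. There is therefore nothing to compare your argument against; the relevant question is simply whether your proof is correct.

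It is. The ``only if'' direction is unproblematic: a rank-deficient realization of $\begin{pmatrix}B\\C\end{pmatrix}$ forces the first $q$ columns of $M$ to be dependent for every $D$, and likewise a rank-deficient realization of $(C\ E)$ forces the last $n-p$ rows of $M$ to be dependent for every $D$; in each case one completes the $\forall$-realization arbitrarily on the remaining block.

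For the ``if'' direction your reduction is clean: with $Y$ a basis of the right kernel of $(C\ E)$, the equivalence $M$ nonsingular $\Leftrightarrow$ $BY_1+DY_2$ nonsingular is immediate, and it suffices to exhibit a single $D\in\R^{p\times(n-q)}$ (not necessarily in the box) for which this holds, since $\det M$ is then a non-identically-zero polynomial in the entries of $D$ and $\Rad{D^{\exists}}>0$ gives the box positive Lebesgue measure. The injectivity of $\Phi(v)=(BY_1v,\,Y_2v)$ is exactly where both rank hypotheses enter, via $CY_1v=-EY_2v$; your argument here is correct. The final construction---splitting a basis of $\R^p$ along $\ker Y_2$, noting that $BY_1$ is injective on $\ker Y_2$ by the injectivity of $\Phi$, and prescribing $D$ on the independent vectors $Y_2v_{p-k+1},\dots,Y_2v_p$ so as to complete $\{BY_1v_i\}_{i\le p-k}$ to a basis---is a valid linear-algebra exercise.

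In short, your proposal resolves the conjecture affirmatively, going well beyond what the paper establishes.
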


\section{Conclusion}

We introduced a generalized concept of regularity of interval matrices based on $\forall\exists$ quantification. Characterization of the general case turned out to be a very difficult problem, and we stated several open question. On the other hand, we identified a couple of polynomially recognizable sub-classes such as M-matrices, H-matrices or matrices with structured quantifier position.



\end{document}